\title{Rational Points on Certain Homogeneous Varieties}
\author{Pengyu Yang}
\address{Department of Mathematics\\
	ETH Z\"urich\\
	Switzerland}
\email{pengyu.yang@math.ethz.ch}
\date{\today}
\keywords{Rational points, height, counting}
\subjclass[2010]{14G05; 11G50}
\newtheorem{thm}{Theorem}[section]
\newtheorem{prop}[thm]{Proposition}
\newtheorem{lem}[thm]{Lemma}
\newtheorem{cor}[thm]{Corollary}
\theoremstyle{definition}
\newtheorem{definition}[thm]{Definition}
\newtheorem{example}[thm]{Example}
\theoremstyle{remark}
\newcommand{\Q}{\mathbb{Q}}
\newcommand{\R}{\mathbb{R}}
\newcommand{\A}{\mathbb{A}}
\newcommand{\SL}{\mathrm{SL}}
\newcommand{\Sp}{\mathrm{Sp}}
\numberwithin{equation}{section}
\begin{document}
	\begin{abstract}
		Let $L$ be a simply-connected simple connected algebraic group over a number field $F$, and $H$ be a semisimple absolutely maximal connected $F$-subgroup of $L$. Let $\Delta(H)$ be the image of $H$ diagonally embedded in $L^n$. Under a cohomological condition, we prove an asymptotic formula for the number of rational points of bounded height on projective equivariant compactifications of $\Delta(H)\backslash L^n$ with respect to a balanced line bundle, and hence confirm Manin's conjecture for this case.
	\end{abstract}

	\maketitle

	\section{Introduction}
	Let $X$ be a smooth projective variety over a number field $F$. $X$ is called Fano if the anticanonical divisor $-K_X$ is ample. Given an ample line bundle $\mathscr{L}$ on $X$, and an adelic metrization $\mathcal{L}$ of $\mathscr{L}$, we can define an associated height function
	\begin{equation}
	H_{\mathcal{L}}\colon X(F)\to\R_{>0}
	\end{equation}
	on the set of $F$-rational functions (see \cite{CT} Section 2). Take a suitable Zariski open $X^{\circ}\subset X$, Manin's conjecture \cite{MT} predicts the asymptotic growth of the number of rational points with height at most $T$ in $X^{\circ}(F)$, as $T\to\infty$. Consider the counting function
	\begin{equation}
	N(X^{\circ}(F),\mathcal{L},T) = \#\{ x\in X^{\circ}(F)\colon H_{\mathcal{L}}(x)\leq T \},
	\end{equation}
	and it is conjectured that
	\begin{equation}
	N(X^{\circ}(F),\mathcal{L},T) \sim c(\mathcal{L})T^{a(\mathscr{L})}(\log T)^{b(\mathscr{L})-1},
	\end{equation}
	where $c(\mathcal{L})>0$, and $a(\mathscr{L})$, $b(\mathscr{L})$ are geometric constants attached to $X$ and $\mathscr{L}$, which we define below. \par

	Let $\mathscr{L}$ be an ample line bundle on $X$, and let $\Lambda_{\mathrm{eff}}(X)$ denote the pseudo-effective cone in the real N\'eron-Severi group $\mathrm{NS}(X,\R)$. We define
	\begin{align}\label{eq:def_a_b}
	a(\mathscr{L}) &= \inf \left\{ t\in\Q\colon t[\mathscr{L}]+[K_X]\in \Lambda_{\mathrm{eff}}(X) \right\},\\ \nonumber
	b(\mathscr{L}) &= \text{the maximal codimension of the face containing } a(\mathscr{L})[\mathscr{L}]+[K_X].
	\end{align}
	The property of these two constants was systematically studied in \cite{HTT}, where the notion of \emph{balanced} line bundle was defined.\par

	In the equivariant setting, let $G$ be an algebraic group over $F$, and $H$ be an $F$-subgroup of $G$. Take $X^{\circ}=H\backslash G$, and let $X$ be a smooth $G$-equivariant compactification of $X^{\circ}$. Let $\mathscr{L}$ be an ample line bundle on $X$. Several cases have been studied in recent years using ergodic-theoretical methods. Gorodnik, Maucourant and Oh \cite{GMO} proved Manin's conjecture for $G=H\times H$. Later Gorodnik and Oh \cite{GO} proved Manin's conjecture for $G$ a connected semisimple $F$-group, and $H$ a semisimple maximal connected $F$-subgroup of $G$, under certain cohomological condition. Gorodnik, Takloo-Bighash and Tschinkel \cite{GTT} settled the case where $H$ is a simple connected $F$-group diagonally embedded into $G=H^n$, and $\mathscr{L}=-K_X$ is the anticanonical bundle. \par 
	\begin{definition}
		Let $X$ be an equivariant compactification of $X^{\circ}=H\backslash G$ and $H'\subset G$ any closed proper subgroup containing the diagonal, i.e. $H\subset H'$. Let $X'\subsetneq X$ be the induced equivariant compactification of $H'$. A line bundle $\mathscr{L}$ on $X$ is called \emph{balanced with respect to $H'$} if
		\begin{equation}
		\left( a(\mathscr{L}|_{X'}),b(\mathscr{L}|_{X'}) \right) < \left( a(\mathscr{L}), b(\mathscr{L}) \right),
		\end{equation} 
		in the lexicographic ordering. It is called \emph{balanced} if this property holds for every such $H'\subset G$.
	\end{definition}
	In this article we confirm the following case of Manin's conjecture.
	
	\begin{thm}\label{thm:main_thm}
		Let $L$ be a simply-connected absolutely-simple connected algebraic group over a number field $F$, and $H$ be a semisimple absolutely maximal connected $F$-subgroup of $L$. Let $G=L^n$, and $\Delta(H)$ the image of the diagonal embedding of $H$ into $G$. Let $X$ be a $G$-equivariant compactification of $X^{\circ}=\Delta(H)\backslash G$. Let $\mathscr{L}$ be a balanced line bundle on $X$, with a smooth adelic metrization $\mathcal{L}$. Suppose that for any completion $F_v$ of $F$, the map of Galois cohomology $H^1(F_v,H)\to H^1(F_v,L)$ is injective. Then
		\begin{equation}\label{eq:asymptotic}
		N(X^{\circ}(F),\mathcal{L},T) \sim c_{\mathcal{L}}\cdot T^{a(\mathscr{L})}(\log T)^{b(\mathscr{L})-1},
		\end{equation}
		as $T\to\infty$, for some $c_{\mathcal{L}}>0$.
	\end{thm}
	Our proof is based heavily on \cite{GTT}, and also combines techniques from \cite{CT}\cite{GO}.
	
	\begin{example}
		Let $L=\SL_{2m}$ and $H=\Sp_{2m}$, where $m\geq2$ is an integer. Let $X$ be an equivariant compactification of $\Delta(H)\backslash L^n$, and we take $\mathscr{L}$ to be its anticanonical bundle $-K_X$. By \cite[Theorem 1.3]{HTT} and \Cref{intermediateclassification} below, $-K_X$ is balanced. Moreover, since $H^1(F_v,H)$ is trivial, we know that $H^1(F_v,H)\to H^1(F_v,L)$ is injective. Hence the conditions of \Cref{thm:main_thm} are satisfied, and the asymptotic formula \eqref{eq:asymptotic} holds for this case.
	\end{example}
	
	\subsection*{Acknowledgement}
	I would like to thank David Anderson and Alexander Gorodnik for helpful discussions.
	
	\section{Intermediate Subgroups}
	Let $F$ be an algebraically closed field of characteristic $0$. Let $L$ be a simply-connected simple connected algebraic group defined over $F$, and $H$ be a semisimple maximal connected $F$-subgroup of $L$. $G=L^n$ is the $n$-fold direct product of $L$. Let $\Delta(H)$ denote the diagonal embedding of $H$ into $G$. In this section we classify all the subgroups of $G$ which contain $\Delta(H)$. For any group $N$ and integer $r$, let $\Delta_{r}(N)$ denote the image of the diagonal embedding of $N$ into $N^r$.
	\begin{prop}[c.f.\cite{GTT} Proposition 4.1]\label{intermediateclassification}
		Suppose $M$ is a connected algebraic group such that $\Delta(H)\subset M\subset G$. Then there exists positive integers $n_1, \cdots , n_k; m_1, \cdots , m_l$ such that $n_1 + \cdots + n_k + m_1 + \cdots + m_l = n$, and that up to permutation of indices, $M$ is the image of the morphism
		$$\prod_{i=1}^{k}\Delta_{n_i}(H)\prod_{j=1}^{l}\Delta_{m_j}(L) \longrightarrow L^n$$
		$$(h_1, \cdots , h_s, g_1, \cdots, g_t) \mapsto (h_1, \cdots, h_s, \rho_1(g_1), \cdots, \rho_t(g_t)),$$
		where $\rho_i : L \rightarrow L$ is an automorphism of $L$ fixing each element in $H$.
	\end{prop}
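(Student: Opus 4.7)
My plan is to adapt the strategy of \cite{GTT} Proposition 4.1 to the present setting, where the embedded subgroup $H$ is a proper maximal connected subgroup of the ambient simple factor $L$ (rather than equal to it). The argument proceeds by progressively analyzing the image of $M$ under coordinate and pairwise projections, applying Goursat's lemma for subgroups of products of (semi)simple algebraic groups.

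For each coordinate projection $\pi_i\colon G \to L$, the image $\pi_i(M)$ is a connected algebraic subgroup containing $\pi_i(\Delta(H)) = H$, so by maximality of $H$ it equals either $H$ or $L$; this induces a partition $\{1, \ldots, n\} = I_H \sqcup I_L$. For pairwise projections $N := \pi_{ij}(M)$ I distinguish three cases. When $i, j \in I_L$, $N \subset L \times L$ surjects onto each factor; Goursat's lemma combined with the simplicity (and simple-connectedness) of $L$ forces $N$ to be either $L \times L$ or the graph of an algebraic automorphism $\rho\colon L \to L$, and the containment $\Delta(H) \subset N$ forces $\rho$ to fix $H$ pointwise in the graph case. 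When $i \in I_L$ and $j \in I_H$, the same Goursat setup combined with the strict inequality $\dim H < \dim L$ rules out a graph-type $N$ and yields $N = L \times H$, effectively decoupling the $L$- and $H$-coordinates of $M$. When $i, j \in I_H$, $N \subset H \times H$ contains $\Delta(H)$, and one aims to show $N \in \{\Delta(H), H \times H\}$ (see the obstacle below).

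To globalize, I introduce equivalence relations on $I_L$ and $I_H$ by declaring $i \sim j$ iff $\pi_{ij}(M)$ is the ``linked'' case (a graph of an automorphism for $i, j \in I_L$, or $\Delta(H)$ for $i, j \in I_H$). Symmetry is immediate and transitivity follows from a brief triple-projection argument: if the $j$-th coordinate is determined by the $i$-th and the $k$-th by the $j$-th, then the $k$-th is determined by the $i$-th. The equivalence classes then partition $I_H$ into blocks of sizes $n_1, \ldots, n_k$ and $I_L$ into blocks of sizes $m_1, \ldots, m_l$; within each block the coordinates of elements of $M$ are determined by a single parameter in $H$ or $L$, with the automorphisms on $L$-blocks absorbed into the $\rho_i$ of the statement. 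Combined with the decoupling from the mixed case, this exhibits $M$ as the image of the stated morphism.

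\textbf{The main obstacle} is the pairwise classification for $i, j \in I_H$ when $H$ is only semisimple (not simple). Writing $H = \prod_r H_r$ as a product of simple factors, Goursat a priori permits intermediate connected subgroups $N \subset H \times H$ properly containing $\Delta(H)$, for example the preimage of the diagonal under a quotient $H \times H \twoheadrightarrow H/N_1 \times H/N_1$ for $N_1 \trianglelefteq H$ a proper subproduct of simple factors. Ruling these out requires the global constraint that $M$ sits inside $L^n$ together with the maximality of $H$ in the simple group $L$, which should prevent $M$ from splitting along individual simple factors of $H$. The other steps --- the Goursat analysis for pure $I_L$ and mixed pairs, the transitivity check, and the reassembly --- are by comparison routine.
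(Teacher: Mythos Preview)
Your approach is genuinely different from the paper's. You proceed via pairwise projections and Goursat's lemma, building an equivalence relation on the coordinate set; the paper instead argues by induction on $n$, peeling off the last coordinate via $p_1 \colon M \to L^{n-1}$ and analyzing $N := p_2(\ker p_1) \subset L$. Using that $N$ is normalized by $H$ and that $H$ is maximal in the simple group $L$, the paper forces $N$ to be either all of $L$ or finite (an infinite proper $N \trianglelefteq H$ is eliminated by a normalizer argument in $L$), and then finishes each case with the inductive hypothesis together with the lifting property of automorphisms of simply-connected groups. Your approach is more symmetric and conceptually clean when it works; the paper's inductive scheme has the advantage of never needing to reassemble the global structure of $M$ from pairwise data.

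The obstacle you flag for $i,j \in I_H$ is not merely a difficulty of your method: it is fatal to the proposition in the stated generality. If $H$ is semisimple but not simple, say $H = H_1 \times H_2$, then for $n=2$ the subgroup
\[
M \;=\; \{(a,b) \in H \times H : p_{H_1}(a) = p_{H_1}(b)\} \;\subset\; L^2
\]
(where $p_{H_1}\colon H \to H_1$ is the projection) is connected, contains $\Delta(H)$, and is of none of the listed forms; so your hoped-for rescue via ``global constraints from $M \subset L^n$'' cannot succeed. The paper's own proof has exactly the same gap: when every coordinate projection lands in $H$ it falls back on \cite{GTT}~Proposition~4.1, which requires the diagonally embedded group to be \emph{simple}. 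Since the applications elsewhere in the paper (Theorem~\ref{equidistribution}, Section~4) take $H$ simple, the word ``semisimple'' in Proposition~\ref{intermediateclassification} is presumably an oversight. Under the hypothesis that $H$ is simple, your dichotomy $\pi_{ij}(M) \in \{\Delta(H),\, H\times H\}$ for $i,j \in I_H$ does hold and your argument goes through; the only point that still deserves an extra sentence is the final reassembly, i.e.\ passing from pairwise decoupling of distinct blocks to an actual direct-product decomposition of $M$, which follows by iterating Goursat block-by-block or by invoking the classification of connected subgroups of a product of simple groups (as in Lemma~\ref{prodnormal}).
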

	\begin{proof}
		We prove by induction on $n$. The case $n=1$ follows from the maximality of $H$. Suppose the proposition holds for $n$. For $G=L^n$, let $p_1: M \rightarrow L^{n-1}$ denote the projection onto the first $n-1$ entries, and $p_2: M\rightarrow L$ the projection onto the last entry. Without loss of generality we may assume $p_2(M)=L$. Otherwise $M$ is contained in $H^n$ and the conclusion holds by \cite{GTT} Proposition 4.1.
		
		\begin{center}
			\begin{tikzcd}[column sep=small]
			& M \arrow[dr, "p_2"] & \\
			L^{n-1} \arrow[ur,"p_1", leftarrow] & & L
			\end{tikzcd}
		\end{center}
		
		Now consider $N:= p_2(ker~p_1)$. Since $M$ contains $\Delta(H)$, it follows that $N$ is normalized by $H$. If $N$ is contained in $H$, then $N$ is a normal subgroup of $H$. If $N$ is not contained in $H$, then $H$ is a proper subgroup of $NH$, but $H$ is maximal, hence $NH = L$. Since $L$ is simple and $N$ is normal in $L$, we conclude that $N=L$. Now we discuss all the possible cases.
		
		\textit{Case 1.}~$N={L}$. In this case, $M=p_1(M)\times L$. By inductive hypothesis we know that $p_1(M)$ is of the form as in the proposition. Hence $M=p_1(M)\times L$ also satisfies the conclusion.
		
		\textit{Case 2.}~$N$ is an infinite normal subgroup of $H$. Let $N_L(N)$ denote the normalizer of $N$ in $L$, then it is a proper subgroup of $L$ containing $H$. Since $p_2(M) = L$, we can take $g\in p_2(M)$ such that $g\not\in N_{L}(N)$. Then there exists $a\in L^{n-1}$ such that $(a, g)\in M$. It follows that $(a^{-1}, Ng^{-1})\subset M$, and this implies $gNg^{-1}\subset N$. Therefore $g\in N_L(N)$, contradicting the choice of $g$.
		
		\textit{Case 3.}~$N$ is finite. Then $p_1$ is an isogeny. By inductive hypothesis we may assume that $p_1(M)=L^rH^s$. Since $p_2(M)=L$, we get a surjection $\overline{p_2} : L^rH^s\rightarrow L/N$, whose kernel is denoted by $K$. Since $L$ and $H$ are both semisimple, by Lemma \ref{prodnormal}, the neutral component $K^0$ of $K$ is of the form $L^{r-1}H^s$.\\
		
		\begin{center}
			\begin{tikzcd}
			& p_1(M)\simeq L^rH^s \arrow[d] \arrow[dr, "\tilde{p_2}"] & & \\
			& p_1(M)/K_0 \arrow[r,dotted, "\phi"] \arrow[d, "\pi"]
			& L \arrow[d, "\pi"] \\
			& p_1(M)/K \arrow[r, "\psi"]
			& L/N
			\end{tikzcd}
		\end{center}
		
		Since $\psi$ is an automorphism of $L/N$, by the following Theorem \ref{PR2.8}, $\psi$ induces an isomorphism $\phi:p_1(M)\rightarrow L$. Further lifting to $p_1(M)$, we can say that $\overline{p_2}$ induces $\tilde{p_2} : p_1(M) \rightarrow L$, whose mapping graph $M_0$ is contained in $M$. Since $M$ is connected, we know that $M_0 = M$. In other words, $p_1$ is surjective. Therefore $\overline{p_2}$ induces an automorphism $\rho: L\rightarrow L$. Since $\Delta(H)$ is contained in $M$, we know that $\rho$ fixes each element in $H$.  \par
	\end{proof}
	
	\begin{lem}\label{prodnormal}
		Suppose $G_i$'s are simple algebraic groups, and $H$ is a connected normal subgroup of $\prod_{i=1}^{n}G_i$. Then there exist $1\leq i_1 < \cdots < i_k \leq n$ such that $H = \prod_{j = 1}^{k}G_{i_j}$.
	\end{lem}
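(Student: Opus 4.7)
The plan is to work one factor at a time using the projection homomorphisms $\pi_i : \prod_j G_j \to G_i$. For each $i$, the image $\pi_i(H)$ is a closed connected normal subgroup of $G_i$ (normal because $H$ is normal in the product, and the other factors centralize $G_i$). Since $G_i$ is simple, any such subgroup is either trivial or all of $G_i$. Setting $S = \{i : \pi_i(H) = G_i\}$, the triviality of $\pi_j(H)$ for $j \notin S$ immediately gives the inclusion $H \subseteq \prod_{i \in S} G_i$.

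For the reverse inclusion, the idea is to exploit the commutator subgroup $[G_i, H]$ inside $\prod_j G_j$ for each $i \in S$. The key observation is that since the factors $G_j$ pairwise commute, a commutator of the form $[g, h]$ with $g \in G_i$ (viewed as $(e,\dots,e,g_i,e,\dots,e)$) and $h = (h_1,\dots,h_n) \in H$ equals $(e,\dots,e,[g_i,h_i],e,\dots,e)$, which lies in $G_i$. Thus $[G_i, H] \subseteq G_i$, and on the other hand $[G_i, H] \subseteq H$ by normality of $H$. Projecting this commutator subgroup to $G_i$ recovers $[G_i, \pi_i(H)] = [G_i, G_i]$, which equals $G_i$ because any simple algebraic group is perfect (its derived subgroup is a closed connected normal subgroup, hence trivial or the whole group, and cannot be trivial since $G_i$ is non-abelian). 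Therefore $G_i = [G_i, H] \subseteq H$ for each $i \in S$, and taking the product yields $\prod_{i \in S} G_i \subseteq H$.

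Combining the two inclusions gives $H = \prod_{i \in S} G_i$, which is the claimed form. The only real subtlety is the correct interpretation of ``simple algebraic group'': the argument uses both that $G_i$ has no proper nontrivial connected normal subgroup (to collapse $\pi_i(H)$ to either $\{e\}$ or $G_i$) and that $[G_i,G_i] = G_i$ (to make the commutator trick produce the full factor). Both are standard consequences of simplicity in the algebraic-group sense, so I expect no genuine obstacle beyond making these two invocations explicit.
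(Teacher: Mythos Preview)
Your proof is correct and rests on the same commutator idea as the paper: for a factor $G_i$ onto which $H$ surjects, the commutators $[g,h]$ with $g\in G_i$ (embedded in the product) land in $G_i\cap H$ and generate $[G_i,G_i]=G_i$, forcing $G_i\subseteq H$. The paper packages this as an induction on $n$, peeling off the last factor by showing $p_2(\ker p_1)\supseteq [G_n,G_n]=G_n$ and hence $H=p_1(H)\times G_n$, whereas you treat all factors simultaneously; the difference is purely cosmetic.
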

	
	\begin{proof}
		We prove by induction on $n$. Without loss of generality, we may assume that the projection $p_2(H)$ of $H$ onto the last summand is nontrivial. Since $G_n$ is simple, and $p_2(H)$ is connected normal in $G_n$, we know that $p_2(H) = G_n$. Suppose $h = (h_1, h_2)$ where $h_1 \in \prod_{i=1}^{n-1} G_i$. Then for any $g \in G_n$ we have $(h_1, gh_2 g^{-1}) \in H$, as $H$ is normal. This implies that $gh_2g^{-1}h_2^{-1}\in p_2(ker p_1)$. Hence $p_2(ker~p_1)$ contains $[G_n, G_n]$, the commutator group of $G_n$. Since $G_n$ is simple, we know that $p_2(ker~p_1) = G_n$. Therefore $H = p_1(H) \times G_n$. And by inductive hypothesis $p_1(H)$ is already a direct product.
	\end{proof}
	
	We now see that the study of automorphism group plays an import role here. Hence let us recall the following result.
	
	\begin{thm}(\cite{PR} Theorem 2.8)\label{PR2.8}
		For any simply connected semisimple group G, the automorphism group Aut $G$ is the semidirect product of Int $G \simeq \bar{G}$ by Sym$(R)$, where $R$ is the Dynkin diagram.
		If $G$ is an arbitrary semisimple group and $\tilde{G} \overset{\pi}{\rightarrow} G$ is a universal covering, then Aut $G$ is isomorphic to the subgroup of Aut $\tilde{G}$ fixing ker $\pi$ , the fundamental group.
	\end{thm}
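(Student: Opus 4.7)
The plan is to establish Theorem \ref{PR2.8} via the theory of pinnings. Fix a maximal torus $T\subset G$, a Borel $B\supset T$, and for each simple root $\alpha$ (in the root system $R$ determined by $(T,B)$) an isomorphism $x_\alpha:\mathbb{G}_a\to U_\alpha$ onto the corresponding root subgroup. Call the data $\mathcal{P}=(T,B,\{x_\alpha\})$ a \emph{pinning}, and let $\mathrm{Aut}(G,\mathcal{P})$ denote the subgroup of $\mathrm{Aut}(G)$ stabilizing $\mathcal{P}$. The central classical fact, due to Chevalley, is that $\mathrm{Aut}(G,\mathcal{P})\cong \mathrm{Sym}(R)$: an element $\tau\in\mathrm{Sym}(R)$ preserves the based root datum, and by the isomorphism theorem for split semisimple groups it determines a unique automorphism of $G$ respecting the pinning. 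The simply connected hypothesis ensures that every diagram symmetry preserves the character lattice $X^\ast(T)$ (which equals the full weight lattice), so the entire $\mathrm{Sym}(R)$ lifts.

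Next I would show $\mathrm{Aut}(G)=\mathrm{Int}(G)\cdot \mathrm{Aut}(G,\mathcal{P})$. Given $\sigma\in \mathrm{Aut}(G)$, using that maximal tori are conjugate and that Borels containing a fixed $T$ form a single $N_G(T)/T$-orbit, one composes $\sigma$ with an inner automorphism to arrange that $(T,B)$ is preserved. The pinning-preserving automorphisms act transitively on the set of pinnings with underlying pair $(T,B)$: the action on each root subgroup $U_\alpha$ is a scalar, which can be adjusted by conjugating by a suitable $t\in T$, using that the simple roots yield a surjection $T\to (\mathbb{G}_m)^{|\Delta|}$ in the simply connected case. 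A further inner conjugation then brings $\sigma$ into $\mathrm{Aut}(G,\mathcal{P})$. The intersection $\mathrm{Int}(G)\cap\mathrm{Aut}(G,\mathcal{P})$ is trivial: an $\mathrm{Int}(g)$ preserving $(T,B)$ forces $g\in N_G(B)\cap N_G(T)=T$, and preserving each $x_\alpha$ then gives $\alpha(g)=1$ for every simple $\alpha$, hence for every root, so $g\in Z(G)$. This yields the semidirect decomposition $\mathrm{Aut}(G)=\mathrm{Int}(G)\rtimes \mathrm{Sym}(R)$, with $\mathrm{Int}(G)\cong G/Z(G)=\bar G$.

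For the second assertion, let $\pi:\tilde G\to G$ be a universal cover with $\Gamma=\ker\pi\subset Z(\tilde G)$. Given $\sigma\in\mathrm{Aut}(G)$, the composite $\sigma\circ\pi:\tilde G\to G$ lifts uniquely along $\pi$ (by the universal property of the covering) to a homomorphism $\tilde\sigma:\tilde G\to\tilde G$ with $\pi\circ\tilde\sigma=\sigma\circ\pi$, necessarily an automorphism, and forced to preserve $\Gamma$ setwise. Conversely any $\tilde\sigma\in\mathrm{Aut}(\tilde G)$ with $\tilde\sigma(\Gamma)=\Gamma$ descends to an automorphism of $G$, giving the claimed isomorphism. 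I expect the main obstacle to be the normalization step in the second paragraph: to verify that one can simultaneously absorb all scalar factors on the $U_\alpha$ by a single $t\in T$ requires the full strength of simple connectedness, namely that the weight lattice of $T$ equals the character lattice, so that the simple roots generate a direct factor of $X^\ast(T)$.
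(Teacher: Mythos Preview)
The paper does not prove this statement at all; it is quoted verbatim as Theorem~2.8 from \cite{PR} and used as a black box (to feed into Corollary~\ref{finitelymanyintermediate}). So there is no ``paper's proof'' to compare against. Your sketch via pinnings is the standard argument and is essentially what one finds in \cite{PR} or in SGA3, Exp.~XXIV.

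One small correction to your self-diagnosis: you locate the role of simple connectedness in the normalization step, claiming that the simple roots must generate a direct summand of $X^\ast(T)$ in order for $T\to(\mathbb{G}_m)^{|\Delta|}$ to be surjective. That is not quite right---for $\mathrm{SL}_2$ the single simple root is $2\in\mathbb{Z}=X^\ast(T)$, not a direct summand. The surjectivity of $T\to(\mathbb{G}_m)^{|\Delta|}$ holds for \emph{any} semisimple $G$ over an algebraically closed field, simply because the simple roots are linearly independent in $X^\ast(T)$, so the dual map on character lattices is injective and hence the map of tori is surjective. Where simple connectedness genuinely enters is exactly where you first mentioned it: in lifting an arbitrary $\tau\in\mathrm{Sym}(R)$ to an automorphism of $G$. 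A diagram symmetry always acts on the weight lattice $P$; it gives an automorphism of $G$ precisely when it preserves $X^\ast(T)\subset P$, which is automatic only when $X^\ast(T)=P$, i.e.\ when $G$ is simply connected. This is also what makes the second assertion of the theorem nontrivial: for general $G$ one must restrict to those $\tau$ preserving $\ker\pi$.
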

	
	Here we also give a corollary of this theorem, which will be crutial in the next section.
	
	\begin{cor}\label{finitelymanyintermediate}
		Let $G$ be a simply-connected semisimple group, and $H$ is a maximal connected subgroup of $G$. Then the set
		$$\{ \rho \in Aut \, G : \rho(h)=h,\forall h \in H \}$$
		is finite.
	\end{cor}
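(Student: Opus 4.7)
The plan is to use Theorem \ref{PR2.8} to reduce the statement to a finiteness assertion about the centralizer $Z_G(H)$, and then exploit the maximality of $H$ via a dichotomy applied to the connected subgroup $H\cdot Z_G(H)^0$.

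By Theorem \ref{PR2.8}, $\mathrm{Aut}\,G$ is a semidirect product of $\mathrm{Int}\,G\simeq G/Z(G)$ by the finite group $\mathrm{Sym}(R)$. Under the projection $\mathrm{Aut}\,G\to\mathrm{Sym}(R)$, our set maps into $\mathrm{Sym}(R)$, so it suffices to bound the number of inner automorphisms fixing $H$ pointwise. Since $\mathrm{Int}(g)$ restricts to the identity on $H$ iff $g\in Z_G(H)$, this number equals $|Z_G(H)/Z(G)|$; because $G$ is semisimple, $Z(G)$ is finite, so it is enough to prove that $Z_G(H)$ itself is finite.

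Set $C=Z_G(H)^0$. Since $C$ centralizes, and hence normalizes, the connected group $H$, the product $HC$ is a connected subgroup of $G$ containing $H$. By the maximality of $H$, either $HC=H$ or $HC=G$. In the latter case, $C$ commutes with $H$ and trivially with itself, hence with a generating set of $G$, so $C\subseteq Z(G)$; as $Z(G)$ is finite and $C$ is connected, this forces $C=1$, and then $H=G$, contradicting that $H$ is a proper subgroup. Hence $C\subseteq H$, and combining with $C\subseteq Z_G(H)$ yields $C\subseteq Z(H)$. The standing hypothesis of this section is that $H$ is semisimple, so $Z(H)$ is finite; being connected, $C$ must then be trivial. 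Therefore $Z_G(H)$ has trivial identity component, hence is finite, as desired.

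The argument is short and the only subtle point is the maximality dichotomy together with the use of semisimplicity of $H$ to conclude that $Z(H)$ is finite; everything else is a routine unwinding of Theorem \ref{PR2.8}.
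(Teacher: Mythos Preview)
Your approach matches the paper's: reduce via Theorem~\ref{PR2.8} to showing that $Z_G(H)$ is finite, then invoke maximality of $H$. The paper simply asserts this finiteness in one line; you supply more detail, which is welcome, but your treatment of the case $HC=G$ contains a gap. You write that ``$C$ commutes with $H$ and trivially with itself'' and conclude $C\subseteq Z(G)$; this amounts to asserting that $C=Z_G(H)^0$ is abelian, which is not a priori true---the centralizer of a subgroup need not be commutative (for instance, if $H$ were trivial then $C=G$).

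To rule out $HC=G$, argue instead that this equality forces $H$ to be normal in $G$, since $H$ is normalized by both $H$ and its centralizer $C$. As $G$ is simply-connected semisimple, $G=\prod_i G_i$ with each $G_i$ simple, and a proper connected normal subgroup has the form $\prod_{i\in I}G_i$ for some proper subset $I$; choosing $j\notin I$ and any proper nontrivial connected subgroup $B_j\subset G_j$ (e.g.\ a Borel) yields a connected group strictly between $H$ and $G$, contradicting maximality. With this repair the argument goes through, and your handling of the case $HC=H$ (using the section's standing hypothesis that $H$ is semisimple, so that $Z(H)$ is finite) is correct.
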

	
	\begin{proof}
		By Theorem \ref{PR2.8}, it suffices to consider inner automorphisms only, as $\mathrm{Sym}\,R$ is already finite. Suppose $\rho (g) = g_0gg_0^{-1}$ for some $g_0 \in G$, then $g_0 \in Z_H(G)$. Since $H$ is maximal and $G$ is semisimple, we know that $Z_H(G)$ is finite. Hence we only have finitely many choices of such automorphisms.
	\end{proof}
	
	\section{Equidistribution of translated measures}\label{sect:equidistribution}
	Let $F$ be a number field. Let $L$ be a connected semisimple group defined over $F$, and $H$ a semisimple maximal connected subgroup of $L$. Notice that in this section we drop the assumption that $L$ is simply-connected. Let $\pi:\tilde{L}\rightarrow L$ be the universal cover of $L$ and $W$ a compact subgroup of $L(\mathbb{A})$ such that $W\cap L(\mathbb{A}_f)$ is open in $L(\mathbb{A}_f)$. Define $$L_W:=L(F)\pi(\tilde{L}(\mathbb{A}))W$$ and $$Y_W:=L(F)\backslash L_W.$$ \\
	Let $C_c(Y_W)^W$ denote the space of compactly supported $W$-invariant continuous functions on $Y_W$. Let $G=L^n$ be the $n-$fold direct product of $L$, and $V=W^n$ is an open subgroup of $G$. Set
	\begin{equation*}
	G_V:=G(F)\pi(\tilde{G}(\mathbb{A}))V=L_W\times\cdots L_W.
	\end{equation*}
	\begin{thm}\label{equidistribution}
		Let $L$ be a connected simple group defined over $F$. Let $H$ be a simple maximal connected subgroup of $L$. Suppose $$\{(b_1^{(k)}, \cdots, b_n^{(k)})\}\subset L_W \times \cdots \times L_W$$ is a sequence such that
		\begin{enumerate}[(1)]
			\item For any $i\neq j$, $$\lim_{k\rightarrow \infty}(b_i^{(k)})^{-1}b_j^{(k)}=\infty.$$
			\item For any $i$, $b_i^{(k)}\rightarrow\infty$ modulo $H(\mathbb{A})$, as $k\rightarrow\infty$.
		\end{enumerate}
		Then for all $f_1, \cdots , f_n \in C_c(Y_W)^W$, we have
		$$\lim_{k\rightarrow \infty}\int_{Y_W}f_1(yb_1^{(k)})\cdots f_n(yb_n^{(k)})\mathrm{d}\nu(y)=\int_{Y_W}f_1\mathrm{d}\mu\cdots\int_{Y_W}f_n\mathrm{d}\mu$$
		where $\nu$ is the invariant probability measure supported on $H(F)\backslash(H(\mathbb{A})\cap L_W)$ considered as a measure on $Y_W$ via pushing forward by the natural injection, and $\mu$ is the probability Haar measure on $Y_W$.
	\end{thm}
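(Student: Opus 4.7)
The plan is to recast the integral convergence as weak-$*$ convergence of measures on $Y_W^n$ and apply the standard framework of non-divergence plus homogeneous measure-rigidity. For each $k$, let $\nu_k$ denote the pushforward of $\nu$ under the orbit map $y \mapsto (yb_1^{(k)}, \ldots, yb_n^{(k)})$: this is a probability measure supported on the $\Delta(H(\mathbb{A}))$-orbit of $(b_1^{(k)}, \ldots, b_n^{(k)})$ inside $Y_W^n$. Since finite sums of pure tensors $f_1 \otimes \cdots \otimes f_n$ with $f_i \in C_c(Y_W)^W$ are dense by Stone--Weierstrass in the space of $V$-invariant compactly supported continuous functions on $Y_W^n$ (where $V = W^n$), the conclusion of the theorem is equivalent to $\nu_k \to \mu^n$ in the weak-$*$ topology.

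First I would establish tightness of $\{\nu_k\}$ via non-escape of mass for translates of semisimple orbits in $S$-arithmetic quotients (Dani--Margulis, Eskin--Mozes--Margulis), yielding by Prokhorov relative compactness of $\{\nu_k\}$. Second, by the adelic Mozes--Shah/Ratner-type measure-classification theorem (compare its use in \cite{GO}), any weak-$*$ limit $\nu_\infty$ of the $\nu_k$ is itself a homogeneous probability measure, supported on the closed orbit of some connected $F$-subgroup $M \subseteq G$ that contains a conjugate of $\Delta(H)$. By Proposition~\ref{intermediateclassification}, up to permutation of coordinates $M$ is the image of $\prod_i \Delta_{n_i}(H) \times \prod_j \Delta_{m_j}(L)$ twisted by automorphisms $\rho \in \mathrm{Aut}(L)$ fixing $H$ pointwise, and Corollary~\ref{finitelymanyintermediate} shows there are only finitely many such $\rho$.

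Third, I would rule out every proper $M$ using hypotheses (1) and (2). A factor $\Delta_{n_i}(H)$ with $n_i = 1$ would force the $i$-th coordinate projection of $\nu_\infty$ to be supported on a translate of a single $H(\mathbb{A})$-orbit, contradicting the divergence of $b_i^{(k)}$ modulo $H(\mathbb{A})$ from hypothesis (2). A factor coupling two coordinates $s \neq t$ --- namely $\Delta_{n_i}(H)$ with $n_i \geq 2$ or $\Delta_{m_j}(L)$ with $m_j \geq 2$, possibly twisted by some automorphism $\rho$ from the finite set of Corollary~\ref{finitelymanyintermediate} --- would force $(b_s^{(k)})^{-1}\rho(b_t^{(k)})$ to remain in a fixed compact set of $L(\mathbb{A})$ modulo $L(F)$ along a subsequence; finiteness of admissible $\rho$ then reduces this to a direct contradiction with hypothesis (1). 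Hence $M = G$ and $\nu_\infty = \mu^n$; combined with tightness this gives $\nu_k \to \mu^n$, proving the theorem.

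The main obstacle is step two: Mozes--Shah rigidity is formulated for unipotent flows, whereas $\Delta(H)$ is semisimple. One must use that the semisimple $H$ is generated by its unipotent subgroups and apply Ratner's theorem to each such subgroup, assembling the invariances to conclude $\Delta(H)$-invariance of the limit; and one must carry this out uniformly in the adelic setting, in particular verifying that the group $M$ can be taken to contain $\Delta(H)$ itself rather than a mere conjugate (so that Proposition~\ref{intermediateclassification} applies directly). These are precisely the points where the semisimplicity and maximality of $H$, and the simplicity of $L$, enter the argument in an essential way.
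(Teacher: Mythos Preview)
Your strategy matches the paper's: recast as weak-$*$ convergence of translated $\Delta(H)$-orbit measures on $G(F)\backslash G_V$, establish tightness, invoke measure classification to get an intermediate $M$, apply Proposition~\ref{intermediateclassification} and Corollary~\ref{finitelymanyintermediate}, and rule out proper $M$ using hypotheses~(1) and~(2). Two differences are worth flagging.

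First, your ``main obstacle'' evaporates once you cite \cite{GO} Theorem~1.7 directly, as the paper does. Part~(1) of that theorem gives tightness from anisotropicity of $Z_G(\Delta(H))=Z_L(H)^n$ (a consequence of the maximality of $H$ in $L$), bypassing any appeal to general non-divergence results. Part~(2) furnishes not only the connected $F$-subgroup $M$ but also a sequence $\delta^{(k)}\in G(F)$ with $\Delta(H)\subset(\delta^{(k)})^{-1}M\delta^{(k)}$, together with $h^{(k)}\in\pi(\Delta\tilde{H}(\mathbb{A}))$ such that $\delta^{(k)}h^{(k)}c^{(k)}$ converges. It is this convergence, not merely the support description of the limit, that the paper exploits to contradict hypotheses~(1) and~(2); your sketch gestures at this (``force $(b_s^{(k)})^{-1}\rho(b_t^{(k)})$ to remain in a compact set'') but does not isolate where the needed control comes from.

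Second, you skip a genuine technical reduction. The measure $\nu$ lives on $H(F)\backslash(H(\mathbb{A})\cap L_W)$, whereas \cite{GO} Theorem~1.7 is formulated for translates of the orbit measure $\lambda_H$ of $\pi(\tilde{H}(\mathbb{A}))$. The paper first decomposes $H(\mathbb{A})\cap L_W$ as a finite disjoint union of cosets of $H(F)\pi(\tilde{H}(\mathbb{A}))(W^{(k)}\cap H(\mathbb{A}_f))$, indexed by a finite set $\Delta^{(k)}$, and then treats each coset measure separately; this is the origin of the modified sequence $c^{(k)}=(x^{(k)}b_1^{(k)},\ldots,x^{(k)}b_n^{(k)})$ with $x^{(k)}\in\Delta^{(k)}$. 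Your pushforward $\nu_k$ is not directly of the shape to which \cite{GO} applies, so this step cannot be omitted.

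The paper's case split for excluding proper $M$ is organized slightly differently from yours---any $\Delta_{n_i}(H)$-factor, regardless of $n_i$, is eliminated via hypothesis~(2) and $[N_L(H):H]<\infty$, while the pure-$L$ case with some $m_j\ge 2$ uses hypothesis~(1) and finiteness of $Z(L)$---but the two divisions are equivalent.
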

	
	\begin{proof}
		Set 
		$$W^{(k)}=\bigcap_{i=1}^{n}b_i^{(k)}W(b_i^{(k)})^{-1}.$$
		From the proof of Corollary 4.14 in \cite{GO}, we know that $H(F)\pi(\tilde{H}(\mathbb{A}))(W^{(k)}\cap H(\mathbb{A}_f))$ is a normal subgroup of $H(\mathbb{A})\cap L_W$ with finite index, for any $k$. Hence there exists a finite subset $\Delta^{(k)}\subset H(\mathbb{A})\cap L_W$ such that
		$$H(\mathbb{A})\cap L_W=\bigcup_{x\in \Delta^{(k)}}H(F)\pi(\tilde{H}(\mathbb{A}))x(W^{(k)}\cap H(\mathbb{A}_f)),$$
		where the union is a disjoint union. We note that by Corollary 4.10 in \cite{GO}, $H(F)\pi(\tilde{H}(\mathbb{A}))$ is normal in $H(\mathbb{A})\cap L_W$. Observe that the function 
		\[
		y\mapsto f_1(yb_1^{(k)})\cdots f_n(yb_n^{(k)})
		\]
		is right invariant under $W^{(k)}$.Therefore
		
		\begin{equation}\label{flattenedintegral}
		\begin{split}
		& \int_{Y_W}f_1(yb_1^{(k)})\cdots f_n(yb_n^{(k)})\mathrm{d}\nu(y) \\
		=& \sum_{x\in\Delta^{(k)}}\int_{W^{(k)}}\int_{x_0\pi(\tilde{H}(\mathbb{A}))x}f_1(uwb_1^{(k)})\cdots f_n(uwb_n^{(k)})\mathrm{d}\mu_{x}^{(k)}(u)\mathrm{d}w \\
		=& \sum_{x\in \Delta^{(k)}}\frac{1}{\#\Delta^{(k)}}\int_{x_0\pi(\tilde{H}(\mathbb{A}))x}f_1(ub_1^{(k)})\cdots f_n(ub_n^{(k)})\mathrm{d}\mu_{x}^{(k)}(u),
		\end{split}
		\end{equation}
		where $\mu_{x}^{(k)}$ is the invariant probability measure supported on $x_0\pi(\tilde{H}(\mathbb{A}))x$, and $\mathrm{d}w$ is the normalized invariant measure on $W^{(k)}$.
		
		Now for any $k$, choose $x^{(k)}\in \Delta^{(k)}$, and set
		\begin{equation}
		c^{(k)}=(x^{(k)}b_1^{(k)}, \cdots, x^{(k)}b_n^{(k)}).
		\end{equation}
		Since $(c_i^{(k)})^{-1}c_j^{(k)}=(b_i^{(k)})^{-1}b_j^{(k)}$ and $x^{(k)}\in H(\mathbb{A})$, we can see that the sequence $\{c^{(k)}\}$ still satisfies both conditions in the theorem. According to the definition we can rewrite the last integral in (\ref{flattenedintegral}) as
		\begin{equation}\label{tensor}
		\int_{x_0\pi(\tilde{H}(\mathbb{A}))x}f_1(ub_1^{(k)})\cdots f_n(ub_n^{(k)})\mathrm{d}\mu_{x}^{(k)}(u)=\int_{G(F)\backslash G_V}f_1\otimes\cdots\otimes f_n\mathrm{d}(c^{(k)}\cdot\lambda_{H})
		\end{equation}
		Now it remains to determine the limit points of $c^{(k)}\cdot \lambda_{H}$, where $\lambda_{H}$ is the invariant probability measure on $\pi(\Delta\tilde{(H)}(\mathbb{A}))$.
		
		Since $H$ is maximal in $L$, the centralizer $Z_L(H)$ of $H$ in $L$ is anisotropic. Hence the centralizer of $\Delta(H)$ in $G=L^n$, which equals $Z_L(H)\times\cdots Z_L(H)$, is also anisotropic over $F$. Therefore, by \cite{GO} Theorem 1.7(1) we know that $\{c^{(k)}\cdot \lambda_{H}\}$ is relatively compact. Suppose $\mu$ is a limit point. By \cite{GO} Theorem 1.7(2) we get a connected $F$-subgroup $M$ of $G$ and a sequence $\delta^{(k)}\in G(F)$ such that $$\Delta(H)\subset (\delta^{(k)})^{-1}M\delta^{(k)}\subset G.$$ Hence
		\begin{equation}\label{conjugationM}
		M=\delta^{(k)}N_k(\delta^{(k)})^{-1},
		\end{equation}
		where $N_k$ is an intermediate subgroup as described in Proposition \ref{intermediateclassification}. Now it suffices to show that $M=G$, and the theorem will follow by a similar argument to \cite[Theorem 5.1]{GTT}. 
		
		We prove $M=G$ by contradiction. Suppose $M$ is a proper subgroup of $G$. By Corollary \ref{finitelymanyintermediate}, the number of intermediate subgroups is finite, and thus by passing to a subsequence we may assume that $N_k=N$ for all $k$. Here $N$ is a proper subgroup of $G=L^n$.
		
		\textit{Case 1. There is no $\Delta_{n_i}(H)$ part.} Since $N$ is a proper subgroup of $G$, we can find $i\neq j$ such that $\pi_{ij}(N)=\Delta_2(L)$. Set $\sigma^{(k)}=(\delta^{(1)})^{-1}\delta^{(k)}$, we see from Equation (\ref{conjugationM}) that $$z^{(k)}:=(\sigma_i^{(k)})^{-1}\sigma_j^{(k)}\in Z(L)(F).$$By \cite{GO} Theorem 1.7(2), there exists $h^{(n)} \in \pi(\Delta(\tilde{H})(\mathbb{A}))$ such that $\delta^{(k)}h^{(k)}c^{(k)}$ converges. In particular, $(z^{(k)})^{-1}(c_i^{(k)})^{-1}c_j^{(k)}$ converges. Since $Z(L)$ is finite, $\{z^{(k)}\}$ is a compact set. Hence $(c_i^{(k)})^{-1}c_j^{(k)}$ converges, but this contradicts to the fact that pairwise ratios diverge.
		
		\textit{Case 2. There is a $\Delta_{n_i}(H)$ part.} We may assume that $\pi_1(N)=H$. We see from \eqref{conjugationM} that $\sigma_1^{(k)}$ is in the normalizer $N_L(H)$ of $H$. But $H$ has finite index in $N_L(H)$, hence $\{\sigma_1^{(k)}\}$ is bounded modulo $H$. Again by \cite{GO} Theorem 1.7, the sequence $\sigma_1^{(k)}h_1^{(k)}c_1^{(k)}$ converges. This contradicts to the facts that $\{c_1^{(k)}\}$ diverges modulo $H$.
		
		Therefore, $M=G$, and still by \cite{GO} Theorem 1.7 we know that there exists a normal subgroup $M_0$ of $M(\mathbb{A}) = G(\mathbb{A})$ containing $G(F)\pi(\tilde{G(\mathbb{A})})$ and $g\in\pi(\tilde{G(\mathbb{A})})$ such that for any $f\in C_c(G(F)\backslash G_V)^V$, we have the following
		\begin{equation}\label{limitMeasureEqualsHaar}
		\begin{split}
		&\int_{G(F)\backslash G_V}f\,\mathrm{d}\mu = \int_{G(F)\backslash G_V}f\,\mathrm{d}(g\cdot \nu_{M_0}) = \int_{G(F)\backslash G_V}f\,\mathrm{d}\nu_{M_0}\\
		=&\int_{G(F)\backslash G_V}\int_{V}f(uv)\,\mathrm{d}v\,\mathrm{d}\nu_{M_0}=\int_{G(F)\backslash G_V}f\,\mathrm{d}z,
		\end{split}
		\end{equation}
		where $\mathrm{d}\mu_{M_0}$ is the pushforward of the Haar measure on $x_0M_0$, and $\mathrm{d}z$ is the Haar measure on $G(F)\backslash G_V$.\\
		
		Finally, combining equations (\ref{flattenedintegral})(\ref{tensor})(\ref{limitMeasureEqualsHaar}) we finish the proof of the theorem.
	\end{proof}
	
	\section{Volume Computation}
	In this section, let $L$ be a simply-connected simple connected algebraic group over a number field $F$, and $H$ be a simple maximal connected $F$-subgroup of $L$. Denote by $G$ the $n$-fold direct product of $L$. We treat $H$ as a subgroup of $G$ via diagonal embedding. Let $X$ be a smooth projective equivariant compactification of $X^{\circ}=H\backslash G$. Let $\mathscr{L}$ be an ample line bundle on $X$. By \cite{GTT} Proposition 2.1, we can write
	\begin{equation}
	\mathscr{L}=\sum_{\alpha\in\mathcal{A}}\lambda_{\alpha}D_{\alpha},\quad \lambda_{\alpha}\in\Q_{>0},
	\end{equation}
	and
	\begin{equation}
	-K_X=\sum_{\alpha\in\mathcal{A}}\kappa_{\alpha}D_{\alpha},
	\end{equation}
	where all $\kappa_{\alpha}\geq 1$. \par 
	Given a smooth metrization $\mathcal{L}$ of $\mathscr{L}$, as in \cite{CT} Section 2.1 we have a corresponding height function
	\begin{equation}
	\mathrm{H}=\mathrm{H}_{\mathcal{L}}\colon X^{\circ}(F)\to\R_{>0}.
	\end{equation} \par
	There exists a compact open subgroup $V$ of $G(\A_f)$ such that the adelic height function $\mathrm{H}$ is invariant under $V$. By possibly replacing $V$ with a smaller compact open subgroup, we may assume that $V=W\times\cdots\times W$ for a compact open subgroup $W$ of $H(\A_f)$.\\

	To compute the volume of the height ball via standard Tauberian argument, we need the following result.
	\begin{thm}[\cite{GTT} Theorem 3.3]\label{thm:height_integral}
		Let $G$ be a connected semisimple algebraic group and $H\subset G$ a closed subgroup, defined over a number field $F$, such that the map $H^1(E, H)\rightarrow H^1(E, G)$ is injective, for $E$ being either $F$ or a completion of $F$. Let $X$ be a smooth projective equivariant compactification of $X^{\circ}=H\backslash G$ with normal crossing boundary $\bigcup_{\alpha\in\mathcal{A}}D_{\alpha}$ and
		$$\mathrm{H}:\mathbb{C}^{\mathcal{A}}\times X^{\circ}(\mathbb{A})\rightarrow \mathbb{C}$$
		an adelic height system. Then there exists a function $\Phi$, holomorphic and bounded in vertical strips for $\text{Re }s_{\alpha}> \kappa_{\alpha}-\epsilon$, for some $\epsilon > 0$, such that for $\mathbf{s}=(s_{\alpha})$ in this domain one has
		$$\int_{X^{\circ}(\mathbb{A})}\mathrm{H}(\mathbf{s}, x)^{-1}\mathrm{d}x=\prod_{\alpha\in\mathcal{A}}\zeta_{F}(s_{\alpha}-\kappa_{\alpha}+1)\cdot\Phi(\mathbf{s}),$$
		where $\zeta_{F}$ is the Dedekind zeta function.
	\end{thm}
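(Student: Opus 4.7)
The plan is to reduce the global adelic integral to a product of local integrals, compute each local factor via $p$-adic integration near the boundary divisors, and then collect the Dedekind zeta factors that appear. The cohomological injectivity of $H^1(E,H)\to H^1(E,G)$ for $E=F$ and each completion $F_v$ is precisely what identifies $X^\circ(E)$ with $H(E)\backslash G(E)$, so the Tamagawa-type measure $dx$ on $X^\circ(\mathbb{A})$ factors as a restricted product $\prod_v dx_v$ of local measures on $X^\circ(F_v)$. Combined with the product structure $\mathrm{H}(\mathbf{s},x)=\prod_v \mathrm{H}_v(\mathbf{s},x_v)$ of the adelic height, this gives
$$
\int_{X^\circ(\mathbb{A})}\mathrm{H}(\mathbf{s},x)^{-1}\,dx \;=\; \prod_v\int_{X^\circ(F_v)}\mathrm{H}_v(\mathbf{s},x_v)^{-1}\,dx_v.
$$

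Next I would compute the local factor at a nonarchimedean place $v$ outside a finite bad set $S$ where $(X,G,H,D_\alpha)$ all admit smooth integral models over $\mathcal{O}_v$. Partitioning $X^\circ(F_v)$ into $v$-adic balls indexed by reduction data and choosing local coordinates $(x_\alpha)$ in which $D_\alpha=\{x_\alpha=0\}$, the integrand $\mathrm{H}_v(\mathbf{s},x_v)^{-1}\,dx_v$ becomes a smooth multiple of $\prod_\alpha|x_\alpha|_v^{s_\alpha-\kappa_\alpha}$ against the standard coordinate measure, the shift by $\kappa_\alpha$ coming from the pole along $D_\alpha$ of the invariant volume form dictated by $-K_X=\sum_\alpha\kappa_\alpha D_\alpha$. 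Each one-variable $p$-adic integral $\int|x|_v^{s_\alpha-\kappa_\alpha}\,dx$ sums to $(1-q_v^{-(s_\alpha-\kappa_\alpha+1)})^{-1}$ up to an Euler factor of $\zeta_v^{-1}$, and assembling the contributions of the strata $\bigcap_{\alpha\in A}D_\alpha$ via standard Lang--Weil point counts produces a local Euler factor of the form $\prod_\alpha(1-q_v^{-(s_\alpha-\kappa_\alpha+1)})^{-1}\cdot(1+O(q_v^{-1-\delta}))$ for some $\delta>0$, uniformly on the domain $\mathrm{Re}\,s_\alpha>\kappa_\alpha-\epsilon$.

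Then I would handle the finitely many remaining places individually. At archimedean $v$, a partition of unity near the normal crossing boundary reduces the local integral to standard one-dimensional integrals $\int_0^1 r^{s_\alpha-\kappa_\alpha}\,dr$ together with bounded smooth factors, which gives finiteness, holomorphy and polynomial growth in vertical strips throughout the claimed domain; at the finitely many bad nonarchimedean $v$ the same properties follow from an Igusa-type analysis on a resolution of singularities of the local model. The product of bad-place integrals together with the $(1+O(q_v^{-1-\delta}))$ corrections at good places assembles into the function $\Phi(\mathbf{s})$, while the main factors $(1-q_v^{-(s_\alpha-\kappa_\alpha+1)})^{-1}$ across good places, after adjustment of finitely many Euler factors absorbed into $\Phi$, combine to give $\prod_\alpha\zeta_F(s_\alpha-\kappa_\alpha+1)$.

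The main obstacle is the uniform $(1+O(q_v^{-1-\delta}))$ control at good places, which is what makes the product defining $\Phi$ converge absolutely and remain bounded in vertical strips. This requires a precise estimate of the number of $F_v$-points on the strata $\bigcap_{\alpha\in A}D_\alpha$ via Lang--Weil, combined with a comparison between the $G$-invariant measure on $X^\circ(F_v)=H(F_v)\backslash G(F_v)$ and the measure on $X(F_v)$ coming from the smooth integral model. The cohomological hypothesis enters precisely here: it ensures $X^\circ(F_v)=H(F_v)\backslash G(F_v)$ rather than merely one coset therein, so the two measures can be compared on all of $X^\circ(F_v)$ and the Lang--Weil estimates on $X$ transfer to the adelic integral coming from $G$.
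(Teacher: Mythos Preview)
The paper does not give its own proof of this theorem: it is quoted verbatim as \cite{GTT} Theorem~3.3 and used as a black box. So there is nothing in the present paper to compare your argument against beyond the citation.

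That said, your outline is the standard one and matches the method behind the cited result (and its predecessors such as Chambert--Loir--Tschinkel): factor the adelic height integral as an Euler product using the cohomological hypothesis to identify $X^{\circ}(F_v)$ with $H(F_v)\backslash G(F_v)$ and the Tamagawa measure with a restricted product; at good places reduce to local $p$-adic integrals in \'etale coordinates adapted to the normal crossing boundary, obtaining the local zeta factors $(1-q_v^{-(s_\alpha-\kappa_\alpha+1)})^{-1}$ times a correction $1+O(q_v^{-1-\delta})$ via stratification by the $D_\alpha$ and Lang--Weil; handle the finite set of bad and archimedean places by Igusa-type analysis and partitions of unity. Your identification of the role of the cohomological injectivity is accurate. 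If you were writing this up in full you would also want to note that the global injectivity $H^1(F,H)\to H^1(F,G)$ is used to ensure $X^{\circ}(F)$ is a single orbit so that the height zeta function over rational points connects to the adelic integral, while the local injectivity is what makes the measure comparison work placewise, as you say.
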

	
	Let $B_T$ be the height ball in $X_V=X^{\circ}(\mathbb{A})$ defined by
	\begin{equation}
	B_T=B_{T,\mathcal{L}}=\left\{ x\in X_V\colon \mathrm{H}_{\mathcal{L}}(x)<T \right\}.
	\end{equation}
	
	We have the following asymptotic formula for the volume of the height ball.
	
	\begin{lem}[\cite{GTT} Lemma 6.3]\label{lem:volume_asymptotics}
		Let $\mathscr{L}$ be an ample line bundle on $X$, and $\mathcal{L}$ be a smooth metrization of $\mathscr{L}$. Then
		\begin{equation}
		\mathrm{vol}(B_T)\sim c_{\mathcal{L}}\cdot T^{a_{\mathscr{L}}}(\log T)^{b_{\mathscr{L}}-1}
		\end{equation}
		with $a_{\mathscr{L}}$,$b_{\mathscr{L}}$ as in \eqref{eq:def_a_b} and $c_{\mathcal{L}}>0$.
	\end{lem}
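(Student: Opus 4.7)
The plan is to express $\mathrm{vol}(B_T)$ as an inverse Mellin transform whose analytic properties are governed by Theorem \ref{thm:height_integral}, and then extract the asymptotic by a standard Tauberian argument of Ikehara--Delange type. First, I would reduce to a one-variable zeta integral. Writing $\mathscr{L}=\sum_{\alpha}\lambda_{\alpha}D_{\alpha}$, the $\mathcal{L}$-height is recovered from the multi-variable height by specialization $\mathrm{H}_{\mathcal{L}}(x)^{s}=\mathrm{H}((s\lambda_{\alpha})_{\alpha},x)$, so the Mellin formula gives
\begin{equation*}
\mathrm{vol}(B_T)=\frac{1}{2\pi i}\int_{(\sigma)}Z_{\mathcal{L}}(s)\,\frac{T^{s}}{s}\,ds,\qquad Z_{\mathcal{L}}(s):=\int_{X^{\circ}(\A)}\mathrm{H}_{\mathcal{L}}(x)^{-s}\,dx,
\end{equation*}
for $\sigma$ sufficiently large. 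Applying Theorem \ref{thm:height_integral} along the ray $(s\lambda_{\alpha})_{\alpha}$ yields
\begin{equation*}
Z_{\mathcal{L}}(s)=\prod_{\alpha\in\mathcal{A}}\zeta_{F}(s\lambda_{\alpha}-\kappa_{\alpha}+1)\cdot\Psi(s),
\end{equation*}
with $\Psi$ holomorphic and bounded on vertical strips in a half-plane $\mathrm{Re}(s)>a(\mathscr{L})-\epsilon$.

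Next, I would identify the location and order of the rightmost pole of $Z_{\mathcal{L}}$ with the geometric invariants $a(\mathscr{L})$ and $b(\mathscr{L})$. Each factor $\zeta_{F}(s\lambda_{\alpha}-\kappa_{\alpha}+1)$ is simple-pole at $s=\kappa_{\alpha}/\lambda_{\alpha}$. The definition \eqref{eq:def_a_b} of $a(\mathscr{L})$, combined with the description of the effective cone $\Lambda_{\mathrm{eff}}(X)$ in terms of the boundary divisors $\{D_{\alpha}\}$ from \cite{HTT}, forces $a(\mathscr{L})=\max_{\alpha}\kappa_{\alpha}/\lambda_{\alpha}$, and the codimension of the minimal face of $\Lambda_{\mathrm{eff}}(X)$ containing $a(\mathscr{L})[\mathscr{L}]+[K_X]$ equals the number of $\alpha$ attaining this maximum. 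Consequently $Z_{\mathcal{L}}(s)$ has a pole of order exactly $b(\mathscr{L})$ at $s=a(\mathscr{L})$, and by the standard non-vanishing of $\zeta_{F}$ on $\mathrm{Re}(s)=1$ there is no other singularity on the critical line.

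Finally, I would shift the contour past $s=a(\mathscr{L})$ and invoke a Tauberian theorem (Landau--Delange) to obtain
\begin{equation*}
\mathrm{vol}(B_T)\sim c_{\mathcal{L}}\,T^{a(\mathscr{L})}(\log T)^{b(\mathscr{L})-1},
\end{equation*}
where $c_{\mathcal{L}}$ is computed from the leading Laurent coefficient of $Z_{\mathcal{L}}$ at $s=a(\mathscr{L})$ divided by $a(\mathscr{L})\cdot (b(\mathscr{L})-1)!$. Positivity $c_{\mathcal{L}}>0$ follows from positivity of the residues of the relevant $\zeta_{F}$-factors together with non-vanishing of $\Psi(a(\mathscr{L}))$, the latter being an Euler product of strictly positive local integrals $\int \mathrm{H}_{v}^{-\mathbf{s}}\,dx_v$ by the construction in \cite{CT}.

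The main obstacle is the second step: matching the analytic pole order of $Z_{\mathcal{L}}$ with the geometric invariant $b(\mathscr{L})$ requires knowing that the classes of the boundary divisors $D_{\alpha}$ cut out the relevant extremal faces of $\Lambda_{\mathrm{eff}}(X)$, which is a nontrivial input on the birational geometry of equivariant compactifications. In the present setting this is provided by \cite{HTT} and the parallel computation of \cite{GTT} Lemma 6.3, so I would ultimately reduce the claim directly to that reference rather than reproving the face-identification from scratch.
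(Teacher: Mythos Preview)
The paper does not supply a proof of this lemma at all: it is stated as a direct citation of \cite{GTT} Lemma~6.3, with no argument given. Your proposal, by contrast, sketches an actual proof via the one-variable specialization of the height zeta function, identification of the rightmost pole with $a(\mathscr{L})$ and its order with $b(\mathscr{L})$, and a Tauberian theorem. This is correct and is essentially how the result is established in \cite{GTT} and \cite{CT}, so your sketch is more than what the paper requires; your final sentence acknowledging that one ultimately reduces to the reference is exactly what the paper does in toto.
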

	
	The following lemma is a generalized version of \cite{GTT} Lemma 6.6.
	\begin{lem}\label{lem:volume_ratio_tends_to_0}
		Let $H\subset M \subset G$ be semisimple connected algebric groups defined over $F$, and $\mathscr{L}$ be a balanced line bundle on $H\backslash G$. Let $K$ be a compact subset of $G(\mathbb{A})$ such that $k_1k_2^{-1}\notin M(\mathbb{A})$ for all distinct $k_1,k_2\in K$. Suppose $H^1(F_v, H) \rightarrow H^1(F_v, G)$ is injective for any place $v$ of $F$, then for any smooth adelic metrization of $\mathscr{L}$, we have
		\begin{equation}
		\lim_{T\rightarrow\infty}\frac{\mathrm{vol}\left(B_T\cap (H\backslash M)(\mathbb{A})\cdot K\right)}{\mathrm{vol}(B_T)}=0
		\end{equation}
	\end{lem}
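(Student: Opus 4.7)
The plan is to bound $\mathrm{vol}\bigl(B_T \cap (H\backslash M)(\mathbb{A})\cdot K\bigr)$ by a volume inside the smaller adelic variety $(H\backslash M)(\mathbb{A})$, then apply Lemma~\ref{lem:volume_asymptotics} to the restricted line bundle $\mathscr{L}|_{X'}$ on the induced equivariant compactification $X'$ of $H\backslash M$, and finally invoke the balanced hypothesis to conclude.

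I will first disintegrate. The assumption $k_1k_2^{-1}\notin M(\mathbb{A})$ for distinct $k_1,k_2\in K$ guarantees that the multiplication map $(H\backslash M)(\mathbb{A})\times K \to (H\backslash G)(\mathbb{A})$, $(m,k)\mapsto mk$, is injective, and the hypothesis $H^1(F_v,H)\hookrightarrow H^1(F_v,G)$ factors through $H^1(F_v,M)$, so $(H\backslash M)(F_v)=H(F_v)\backslash M(F_v)$ at each place $v$ and the fibration $(H\backslash G)(\mathbb{A})\to (M\backslash G)(\mathbb{A})$ with fiber $(H\backslash M)(\mathbb{A})$ is well defined adelically. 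Disintegrating the Haar measure along this fibration, and writing $K'\subset (M\backslash G)(\mathbb{A})$ for the (compact) image of $K$, I obtain
\begin{equation*}
	\mathrm{vol}\bigl(B_T\cap (H\backslash M)(\mathbb{A})K\bigr)=\int_{K'}\mathrm{vol}\bigl\{m\in(H\backslash M)(\mathbb{A}):\mathrm{H}_{\mathcal{L}}(mk)<T\bigr\}\,dg'(k).
\end{equation*}

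Next I will use smoothness of the metrization to handle the compact translate. Together with compactness of $K$, it yields a uniform comparison $\mathrm{H}_{\mathcal{L}}(mk)\asymp \mathrm{H}_{\mathcal{L}}(m)$ with implied constants depending only on $K$: continuity of the archimedean local heights on compact sets handles the infinite places, while almost every finite component of $K$ lies in the compact open subgroup stabilizing $\mathrm{H}_{\mathcal{L}}$. Hence $\{m:\mathrm{H}_{\mathcal{L}}(mk)<T\}\subseteq B_{CT}\cap(H\backslash M)(\mathbb{A})$ for some $C=C(K)>0$, and combining with the previous display,
\begin{equation*}
	\mathrm{vol}\bigl(B_T\cap(H\backslash M)(\mathbb{A})K\bigr)\ll \mathrm{vol}(K')\cdot \mathrm{vol}\bigl(B_{CT}\cap(H\backslash M)(\mathbb{A})\bigr).
\end{equation*}

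Finally, since ampleness restricts to closed subvarieties, $\mathscr{L}|_{X'}$ is ample on $X'$, and the injectivity $H^1(F_v,H)\hookrightarrow H^1(F_v,M)$ just noted is exactly the hypothesis needed to apply Lemma~\ref{lem:volume_asymptotics} to $(X',\mathscr{L}|_{X'})$, yielding $\mathrm{vol}\bigl(B_{CT}\cap (H\backslash M)(\mathbb{A})\bigr)\ll T^{a(\mathscr{L}|_{X'})}(\log T)^{b(\mathscr{L}|_{X'})-1}$. Dividing by $\mathrm{vol}(B_T)\sim c\,T^{a(\mathscr{L})}(\log T)^{b(\mathscr{L})-1}$ from the same lemma, and invoking the balanced condition $(a(\mathscr{L}|_{X'}),b(\mathscr{L}|_{X'}))<_{\mathrm{lex}}(a(\mathscr{L}),b(\mathscr{L}))$, forces the ratio to tend to zero. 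The hardest step will be the disintegration of the Haar measure on $(H\backslash G)(\mathbb{A})$ along the adelic fibration onto $(M\backslash G)(\mathbb{A})$ with fiber $(H\backslash M)(\mathbb{A})$: one must identify the quotient measure place-by-place and verify compatibility with the injectivity furnished by the $H^1$ hypothesis. A secondary technical issue is the possible singularity of $X'$, which if needed is addressed by passing to a smooth resolution without altering the invariants $a,b$.
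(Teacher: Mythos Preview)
Your approach is essentially the same as the paper's: the paper's proof is a two-line reduction to \cite{GTT} Lemma~6.6 after noting that the cohomological injectivity allows one to apply the height-integral Theorem~\ref{thm:height_integral} (and then Tauberian) to both $H\backslash G$ and $H\backslash M$, and the argument you have written out---disintegrate along $(H\backslash M)(\mathbb A)\times K$, absorb the compact $K$ into a multiplicative constant on the height, compare the resulting volume asymptotics via the balanced inequality---is exactly the content of that GTT lemma. The only cosmetic difference is that you phrase the subvariety asymptotics as an application of Lemma~\ref{lem:volume_asymptotics} to $X'$, whereas the paper goes directly through Theorem~\ref{thm:height_integral}; since Lemma~\ref{lem:volume_asymptotics} is itself deduced from Theorem~\ref{thm:height_integral} plus Tauberian, this is the same thing, and your remark about resolving $X'$ if necessary is the correct way to meet the normal-crossing hypothesis of Theorem~\ref{thm:height_integral}.
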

	
	\begin{proof}
		Since we have injectivity of cohomology, we can apply \Cref{thm:height_integral} to $H\backslash M$ and $H\backslash G$. Now we know the poles of the height integral and their orders, and we can apply the standard Tauberian argument to obtain the volume asymptotics (see \cite{CT} Appendix A). The proof of \cite{GTT} Lemma 6.6 works with no changes needed. 
	\end{proof}
	
	\begin{cor} Let $G=L^n$ and $H$ embeds in $G$ diagonally. Let the volume be given by the Tamagawa measure with respect to a balanced line bundle $\mathscr{L}$.
		\begin{enumerate}[(1)]
			\item Let $K_1$ be a compact subset of $L(\mathbb{A})$. For any $1\leq i < j \leq n$, we have
			$$\lim_{T\rightarrow\infty}\frac{\mathrm{vol}(B_T\cap \{(x_1, \cdots , x_n)\in (H\backslash G)(\mathbb{A}) : (x_i)^{-1}x_j\in K_1\})}{\mathrm{vol}(B_T)}=0$$
			\item Let $K_2$ be a compact subset of $H\backslash L(\mathbb{A})$. Fix $1\leq i \leq n$, we have
			$$\lim_{T\rightarrow\infty}\frac{\mathrm{vol}(B_T\cap \{(x_1, \cdots , x_n)\in (H\backslash G)(\mathbb{A}) : x_i\in K_2\})}{\mathrm{vol}(B_T)}=0$$
		\end{enumerate}
	\end{cor}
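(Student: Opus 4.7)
The plan is to reduce both parts to Lemma~\ref{lem:volume_ratio_tends_to_0} by exhibiting, for each case, a semisimple connected intermediate subgroup $\Delta(H)\subset M\subsetneq G$ and a compact set $K\subset G(\mathbb{A})$ such that (a) the distinguished subset of $B_T$ is contained in $(H\backslash M)(\mathbb{A})\cdot K$, and (b) for any two distinct $\kappa_1,\kappa_2\in K$ we have $\kappa_1\kappa_2^{-1}\notin M(\mathbb{A})$. The cohomological hypothesis of the lemma is inherited for free in each case: we are in the setting of Theorem~\ref{thm:main_thm} where $H^1(F_v,H)\to H^1(F_v,L)$ is injective, and for our $M$'s the maps $H^1(F_v,H)\to H^1(F_v,M)\to H^1(F_v,G)$ factor through a projection to a single $L$ factor, so injectivity propagates.

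For part (1), I would take $M=\{(y_1,\ldots,y_n)\in L^n : y_i=y_j\}\cong L^{n-1}$, which is semisimple connected and manifestly contains $\Delta(H)$. Define
\begin{equation*}
	K=\{(e,\ldots,e,k,e,\ldots,e) : k\in K_1\}\subset G(\mathbb{A}),
\end{equation*}
with $k$ placed in the $j$-th coordinate. Given $(x_1,\ldots,x_n)\in(H\backslash G)(\mathbb{A})$ with $x_i^{-1}x_j=k\in K_1$, writing $m=(x_1,\ldots,x_{j-1},x_i,x_{j+1},\ldots,x_n)$ and $\kappa=(e,\ldots,e,k,e,\ldots,e)$ gives the factorization $(x_1,\ldots,x_n)=m\kappa$ with $m\in M(\mathbb{A})$ and $\kappa\in K$. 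For distinct $\kappa_1,\kappa_2\in K$, the element $\kappa_1\kappa_2^{-1}$ has $e$ in slot $i$ and $k_1k_2^{-1}\ne e$ in slot $j$, hence lies outside $M(\mathbb{A})$. Lemma~\ref{lem:volume_ratio_tends_to_0} then yields the conclusion.

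For part (2), take $M$ to be the subgroup of $G$ with $H$ in position $i$ and $L$ in every other position; this is semisimple connected and contains $\Delta(H)$. The subtle point is selecting $K$: I would choose a compact subset $\tilde{K}_2\subset L(\mathbb{A})$ that maps \emph{bijectively} onto $K_2\subset (H\backslash L)(\mathbb{A})$, which is possible because the projection $L(\mathbb{A})\to(H\backslash L)(\mathbb{A})$ admits local continuous sections and $K_2$ is compact (cover by finitely many sectioned neighborhoods, then extract a cross-section). Set $K=\{(e,\ldots,e,k,e,\ldots,e):k\in\tilde{K}_2\}$ with $k$ in slot $i$. Given $(x_1,\ldots,x_n)$ with $[x_i]\in K_2$, after replacing the representative by a left $H(\mathbb{A})$-translate we may assume $x_i\in\tilde{K}_2$; then $m=(x_1,\ldots,x_{i-1},e,x_{i+1},\ldots,x_n)\in M(\mathbb{A})$ and $\kappa=(e,\ldots,e,x_i,e,\ldots,e)\in K$ satisfy $m\kappa=(x_1,\ldots,x_n)$. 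For distinct $\kappa_1,\kappa_2\in K$, the injectivity of the cross-section forces $k_1 k_2^{-1}\notin H(\mathbb{A})$ in slot $i$, so $\kappa_1\kappa_2^{-1}\notin M(\mathbb{A})$, and Lemma~\ref{lem:volume_ratio_tends_to_0} applies.

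The only nonroutine step is the construction of the cross-section $\tilde{K}_2$ in part (2); everything else is elementary bookkeeping once the right intermediate $M$ is identified. If constructing a genuine continuous cross-section turns out to be delicate, an acceptable workaround is to cover $K_2$ by finitely many compact pieces each admitting a section, apply the lemma to each piece, and sum the resulting vanishing ratios.
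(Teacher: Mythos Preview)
Your proposal is correct and follows exactly the same strategy as the paper: for each part you choose the same intermediate subgroup $M$ (namely $\{y_i=y_j\}$ in (1) and $\{y_i\in H\}$ in (2)) and the same shape of compact set $K$ (the given compact placed in a single coordinate), then invoke Lemma~\ref{lem:volume_ratio_tends_to_0}. If anything you are more careful than the paper in part (2), where you explicitly build a compact cross-section $\tilde{K}_2\subset L(\mathbb{A})$ to ensure the disjointness condition $\kappa_1\kappa_2^{-1}\notin M(\mathbb{A})$; the paper writes $K=\{x_i\in K_2\}$ without commenting on the lift.
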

	
	\begin{proof}
		In (1) we take
		\begin{equation}
		M=\{ (x_1,\cdots,x_n)\in G\colon x_i = x_j \},
		\end{equation}
		and
		\begin{equation}
		K=\{ (x_k)\in G(\mathbb{A})\colon x_j\in K_1;\,x_k=e,\forall k\neq j \}.
		\end{equation}\par 
		In (2) we take
		\begin{equation}
		M=\{ (x_1,\cdots,x_n)\in G\colon x_i \in H \},
		\end{equation}
		and
		\begin{equation}
		K=\{ (x_k)\in G(\mathbb{A})\colon x_i\in K_2;\,x_k=e,\forall k\neq i \}.
		\end{equation}\par 
		Now it remains to apply \Cref{lem:volume_ratio_tends_to_0}.
	\end{proof}
	
	We recall the definition of $L_W$, $Y_W$ and $G_V$ from \Cref{sect:equidistribution}. Since $L$ is simply-connected, we have $L_W=L(F)\pi(\tilde{L}(\mathbb{A}))W=L(\mathbb{A})$, $G_V=(L_W)^n=G(\mathbb{A})$. Denote $Y = Y_W = L(F)\backslash L(\mathbb{A})$, and $Z = Z_V = G(F)\backslash G(\mathbb{A})$. Let $\nu$ be the invariant probability measure supported on $H(F)\backslash H(\mathbb{A})$. Let $\mathrm{d}x$ denote the Tamagawa measures on $X^{\circ}(\mathbb{A})$, and $\mathrm{d}z$ denote the invariant probability measure on $G(F)\backslash G(\mathbb{A})$. \par 
	We recall the following result from \cite{GTT}.
	
	\begin{prop}[\cite{GTT} Corollary 6.8]\label{prop:average_Haar}
		For any $f\in C_c(Z)$,
		$$\lim_{T\rightarrow \infty}\frac{1}{\mathrm{vol}(B_T)}\int_{B_T}\mathrm{d}x\int_{Y}f(yx)\mathrm{d}\nu(y)=\int_{Z}f \mathrm{d}z.$$
	\end{prop}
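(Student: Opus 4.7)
The plan is to combine the pointwise equidistribution from \Cref{equidistribution} with the volume vanishing provided by the corollary just established. I first reduce to the case $f = f_1\otimes\cdots\otimes f_n$ with each $f_i\in C_c(Y)^W$. Finite sums of such tensor products are uniformly dense in $C_c(Z)$ by Stone--Weierstrass (applied to $C_0(Z) = C_0(Y^n)$), and by shrinking $W$ we may assume $V = W^n$-invariance for any preassigned $V$. Both sides of the desired identity are linear in $f$ and the left-hand side is bounded by $\|f\|_\infty$, so this reduction is harmless. Using the cohomological hypothesis, the natural map $H(\mathbb{A})\backslash G(\mathbb{A})\to (H\backslash G)(\mathbb{A})$ is a bijection, so any $x\in (H\backslash G)(\mathbb{A})$ lifts to a representative $(x_1,\ldots,x_n)\in G(\mathbb{A})$, and the assignments $x\mapsto x_i^{-1}x_j\in L(\mathbb{A})$ and $x\mapsto x_i H(\mathbb{A})\in (H\backslash L)(\mathbb{A})$ descend to well-defined maps on $(H\backslash G)(\mathbb{A})$.

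Write
\begin{equation*}
F(x) \;=\; \int_{H(F)\backslash H(\mathbb{A})} f_1(h x_1)\cdots f_n(h x_n)\, d\nu(h).
\end{equation*}
The heart of the argument is the following uniform form of \Cref{equidistribution}: for every $\epsilon > 0$ there exist compact sets $C_1\subset L(\mathbb{A})$ and $C_2\subset (H\backslash L)(\mathbb{A})$ such that
\begin{equation*}
\Bigl|\,F(x) - \prod_{i=1}^{n}\int_Y f_i\,d\mu\,\Bigr| < \epsilon
\end{equation*}
whenever $x_i^{-1}x_j\notin C_1$ for all $i\neq j$ and $x_i H(\mathbb{A})\notin C_2$ for all $i$. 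I would establish this by contradiction: failure would produce a sequence $\{x^{(k)}\}$ satisfying precisely the two divergence hypotheses of \Cref{equidistribution} yet violating the estimate, contradicting the conclusion of that theorem.

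With the uniform estimate in hand, let $S\subset (H\backslash G)(\mathbb{A})$ denote the ``good set'' on which the conditions above are met. The complement $B_T\setminus S$ is contained in a union of finitely many sets of the types considered in parts (1) and (2) of the preceding corollary, hence $\mathrm{vol}(B_T\setminus S)/\mathrm{vol}(B_T) \to 0$ as $T\to\infty$. Since $|F|\leq \|f\|_\infty$, the $B_T\setminus S$ contribution to the normalized average $\mathrm{vol}(B_T)^{-1}\int_{B_T} F\,dx$ is $o(1)$, while on $B_T\cap S$ the integrand is within $\epsilon$ of $\prod_i\int_Y f_i\,d\mu = \int_Z f\,dz$; letting $\epsilon\to 0$ finishes the tensor-product case, and reversing the density reduction proves the proposition in general.

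The main obstacle is passing from the sequential statement \Cref{equidistribution} to the uniform estimate for $F$, and this is precisely where the cohomological injectivity hypothesis enters in a crucial way: it is what permits the lift from $(H\backslash G)(\mathbb{A})$ to $G(\mathbb{A})$, which in turn makes the divergence conditions of \Cref{equidistribution} meaningful for sequences arising in $(H\backslash G)(\mathbb{A})$ and so allows the contradiction argument to be carried through.
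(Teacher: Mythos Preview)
Your proposal is correct and follows essentially the same approach as the paper: Stone--Weierstrass reduction to $W$-invariant tensor products, a uniform-in-$x$ version of \Cref{equidistribution} obtained by contradiction, and the volume-ratio corollary to kill the bad set. The paper's proof is terser but structurally identical; your explicit remarks on the cohomological lift and on extracting the uniform estimate from the sequential one fill in details the paper leaves to \cite{GTT}. One small wording issue: the reduction to $W$-invariant $f_i$ is not achieved ``by shrinking $W$'' (indeed $W$ is fixed by the height), but rather by averaging $f$ over $V$, which is harmless since $B_T$ is $V$-invariant.
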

	
	\begin{proof}
		We follow the proof of \cite{GTT} Corollary 6.8. By the Stone-Weierstrass theorem, it suffices to consider functions of the form $f=f_1\otimes\cdots\otimes f_n$ with $f_i\in C_c(Y)$, and it's shown that we may assume $f_i$ to be $W$-invariant. In this case,
		\begin{equation}
		I(x)=\int_{Y}f(yx)\mathrm{d}\nu(y)=\int_{Y}f_1(yx_1)\cdots f_n(yx_n)\mathrm{d}\nu(y).
		\end{equation} \par 
		Given compact subsets $K_1$ of $L(\mathbb{A})$ and $K_2$ of $(H\backslash L)(\mathbb{A})$, we set
		\begin{equation}
		B_{T,K_1,K_2}=\{ x\in B_T\colon (x_i)^{-1}x_j\notin K_1,i\neq j;\;x_i\notin K_2 \}.
		\end{equation}
		By \Cref{equidistribution}, for every $\epsilon>0$, there exists $K_1$ and $K_2$ such that for all $x=(x_1,\cdots,x_n)\in B_{T,K_1,K_2}$, we have
		\begin{equation}
		\left\vert I(x)-\int_{Y}f_1\mathrm{d}\mu \cdots\int_{Y}f_n\mathrm{d}\mu  \right\vert < \epsilon,
		\end{equation}
		and
		\begin{equation}\label{eq:temp1}
		\int_{B_{T,K_1,K_2}}I(x)\mathrm{d}x = \mathrm{vol}(B_{T,K_1,K_2})\int_{Y}f_1\mathrm{d}\mu \cdots\int_{Y}f_n\mathrm{d}\mu +O(\epsilon\mathrm{vol}(B_{T,K_1,K_2})).
		\end{equation}
		One also has
		\begin{equation}\label{eq:temp2}
		\int_{B_T\backslash B_{T,K_1,K_2}}I(x)\mathrm{d}x = O(\mathrm{vol}(B_T\backslash B_{T,K_1,K_2})).
		\end{equation}
		Since the line bundle $\mathscr{L}$ is balanced, it follows from \Cref{lem:volume_ratio_tends_to_0} that
		\begin{equation}
		\frac{\mathrm{vol}(B_T\backslash B_{T,K_1,K_2})}{\mathrm{vol}(B_T)}\to 0,\quad T\to\infty.
		\end{equation}
		Since $\epsilon>0$ is arbitrary, combining \eqref{eq:temp1} and \eqref{eq:temp2} we finish the proof of the proposition.
	\end{proof}
	
	\begin{proof}[Proof of \Cref{thm:main_thm}]
		By \cite{GTT} Lemma 3.4, the height balls are well-rounded. Then \Cref{thm:main_thm} follows from \Cref{prop:average_Haar} via the standard unfolding argument. See e.g. \cite{GO} Proposition 5.3 and \cite{GTT} Theorem 6.9.
	\end{proof}

	\subsection*{Availability of data and materials}
	Data sharing not applicable to this article as no datasets were generated
	or analysed during the current study.

	
	
	\bibliographystyle{alpha}
	\bibliography{rational}

\end{document}